\numberwithin{equation}{section}
\numberwithin{figure}{section}
\theoremstyle{plain}
\newtheorem{thm}{\protect\theoremname}
\theoremstyle{plain}
\theoremstyle{plain}
\theoremstyle{plain}
\theoremstyle{plain}
\theoremstyle{plain}
\theoremstyle{plain}
\providecommand{\conjecturename}{Conjecture}
\providecommand{\lemmaname}{Lemma}
\providecommand{\theoremname}{Theorem}
\providecommand{\problemname}{Problem}
\providecommand{\propositionname}{Proposition}
\providecommand{\corollaryname}{Corollary}
\providecommand{\definitionname}{Definition}
\begin{document}

\title{The Proportion of Trees that are Linear}

\author{Tanay Wakhare$^\ast$, Eric Wityk$^\dag$, and Charles R. Johnson$^\S$}

\thanks{{\scriptsize
\hskip -0.4 true cm MSC(2010): Primary: 05C30; Secondary: 05C05.
\newline Keywords: Linear trees; Caterpillars.\\
$^\S$ Corresponding author
}}


\address{$^\ast$~University of Maryland, College Park, MD 20742, USA}
\email{twakhare@gmail.com}
\address{$^\dag$~Georgia Institute of Technology, Atlanta, GA 30332, USA}
\email{eric.wityk@gmail.edu}
\address{$^\S$~College of William and Mary, Williamsburg, VA 23185, USA}
\email{crjohn@wm.edu}

\maketitle

\begin{abstract}
We study several enumeration problems connected to \textit{linear trees}, a broad class which includes stars, paths, generalized stars, and caterpillars. We provide generating functions for counting the number of linear trees on $n$ vertices, characterize the asymptotic growth rate of the number of nonisomorphic linear trees, and show that the distribution of $k$-linear trees on $n$ vertices follows a central limit theorem.
\end{abstract}

\section{Introduction}
A \textit{high degree vertex} (HDV) in a simple undirected graph is one of degree at least $3$. A tree is called \textit{linear} if all of its HDV's lie on a single induced path, and \textit{$k$-linear} if there are $k$ HDV's. The linear trees include the familiar classes of paths, stars, generalized stars (g-stars, with exactly one HDV), double g-stars \cite{Johnson3}, and caterpillars \cite{Harary}, etc. They have become important, as all multiplicity lists of eigenvalues occurring among Hermitian matrices, whose graph is a given linear tree, may be constructed via a \textit{linear superposition principal} (LSP) that respects the precise structure of the linear tree \cite{Johnson3, TwoAndrews}. For other, \textit{nonlinear} trees, multiplicity lists require different methodology. For a tree to be nonlinear, there must be at least $4$ HDV's (and at least $10$ vertices altogether). An example of a nonlinear tree and a linear tree, both on $13$ vertices, is given in Figure \ref{linnonlin}.



\begin{center}
\begin{figure}[!ht]
\centering
\def\r{1.9}
\def\k{6pt}
\begin{tikzpicture}[scale=0.7,
rel/.style={circle, draw, only marks, mark=*, mark options={fill=red},mark size=\k},
bor/.style={rectangle,draw,rounded corners=0.6ex, minimum size=4pt, inner sep=5pt},
coo/.style={circle, draw, only marks, mark=*, fill=red, minimum size=\k+3pt},
]
\foreach \x/\xtext in {1,2,3}{
\draw (0pt,0pt) -- ({\r*cos(\x*2*pi/3 r)}, {\r*sin(\x*2*pi/3 r)});}
\draw plot[only marks, mark=*, mark options={fill=red},mark size=\k] 
coordinates{(0,0)} node[] {};
\foreach \y/\xtext in {1,2,3}{
\draw[shift={({\r*cos(2*pi/3 r)}, {\r*sin(2*pi/3 r)})}]
(0pt,0pt) -- ++({\r*cos((2*pi/3+pi/4*(\y-2)) r)}, {\r*sin((2*pi/3+pi/4*(\y-2)) r)}) 
plot[only marks, mark=*, mark options={fill=white},mark size=\k] 
coordinates{({\r*cos((2*pi/3+pi/4*(\y-2)) r)}, {\r*sin((2*pi/3+pi/4*(\y-2)) r)})};}
\foreach \y/\xtext in {1,2,3}{
\draw[shift={({\r*cos(4*pi/3 r)}, {\r*sin(4*pi/3 r)})}] (0pt,0pt) 
-- ++({\r*cos((4*pi/3+pi/4*(\y-2)) r)}, {\r*sin((4*pi/3+pi/4*(\y-2)) r)})
plot[only marks, mark=*, mark options={fill=white},mark size=\k] 
coordinates{({\r*cos((4*pi/3+pi/4*(\y-2)) r)}, {\r*sin((4*pi/3+pi/4*(\y-2)) r)})};}
\foreach \y/\xtext in {1,2,3}{
\draw[shift={({\r*cos(0 r)}, {\r*sin(0 r)})}] (0pt,0pt) 
-- ++({\r*cos((pi/4*(\y-2)) r)}, {\r*sin((pi/4*(\y-2)) r)})
plot[only marks, mark=*, mark options={fill=white},mark size=\k] 
coordinates{({\r*cos((pi/4*(\y-2)) r)}, {\r*sin((pi/4*(\y-2)) r)})};}
\foreach \x/\xtext in {1,2,3}{
\draw plot[only marks, mark=*, mark options={fill=red},mark size=\k] 
coordinates{({\r*cos(\x*2*pi/3 r)}, {\r*sin(\x*2*pi/3 r)})};}
\end{tikzpicture}
\hspace{1cm}
\begin{tikzpicture}[scale=0.7,
bor/.style={circle, draw, only marks, mark=*, fill=white,mark size=\k},
rel/.style={rectangle,draw,rounded corners=0.6ex,  fill=white, minimum size=\k},
coo/.style={circle, draw, only marks, mark=*, fill=red, minimum size=\k+3pt},
]
\draw (-2*\r,0) node[bor]{}-- (0,0) node[coo]{}-- (0,-\r)node[bor]{}  --(0,\r)node[bor]{} --(0,2*\r)node[bor]{} --(0,0) -- (\r,0)node[bor]{} --(3*\r,0)node[bor]{};
\foreach \x/\xtext in {3,5}{
\draw[shift={(-\r,0)}] (0pt,0pt) node[coo]{} -- ({\r*cos(\x*pi/4 r)}, {\r*sin(\x*pi/4 r)})
node[bor]{};}
\foreach \x/\xtext in {-1,1}{
\draw[shift={(2*\r,0)}] (0pt,0pt) node[coo]{}-- ({\r*cos(\x*pi/4 r)}, {\r*sin(\x*pi/4 r)})
node[bor]{};
}
\end{tikzpicture}

\caption{Nonlinear and $3$-linear trees on 13 vertices (HDVs in red)}
\label{linnonlin}
\end{figure}
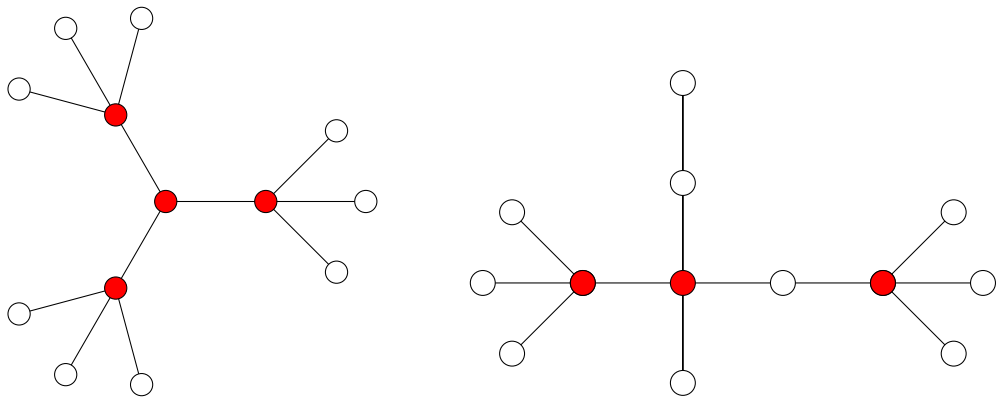
\end{center}

Linear trees are a substantial generalization of caterpillars, and the problem of counting the number of non-isomorphic linear trees is significantly harder than for caterpillars. We define a bivariate generating function for the number of $k$-linear trees on $n$ vertices, which enables the fast computation of these numbers.  Additionally, we are able to obtain asymptotic growth rates which show that the probability that a randomly chosen tree on $n$ vertices will be linear approaches $0$ as $n\to \infty$. This shows that while the LSP is a useful characterization, it has limited applicability to studying the spectra of general trees. As $n$ increases, the LSP characterizes the spectra of an asymptotically vanishing proportion of all trees. However, the proportion of linear trees vanishes slowly, so that the LSP is very important, especially for small numbers of vertices. We conclude with an investigation of the distribution of $k$-linear trees on $n$ vertices, and show that this satisfies a central limit theorem.



\section{Generating Functions}

There are strong links between nonisomorphic linear trees and partitions, which are famously difficult to enumerate.  In constructing a generating function for $k$-linear trees on $n$ vertices, we will rely the generating function for integer partitions. Let $$P(x) := \prod_{i=1}^\infty \frac{1}{1-x^i} = \sum_{n=0}^\infty p(n)x^n$$ denote the generating function for $p(n)$, the number of unrestricted partitions of $n$. Let $r_{n,k}$ be the number of reflections of linear trees on $n$ vertices with $k$ HDV's (which counts linearly symmetric trees once and linearly asymmetric trees twice), and let $s_{n,k}$ denote the number of linearly symmetric trees on $n$ vertices with $k$ HDV's. Letting $a_{n,k}$ denote the number of non-isomorphic $k$-linear trees on $n$ vertices, we deduce $$a_{n,k}=\frac12 \left(r_{n,k}+s_{n,k}\right). $$
The following generating function allows us to compute recurrences for the coefficients which allow for fast computation of $a_{n,k}$.

{\scriptsize
\begin{table}\caption{\cite[Appendix A]{Wityk} The number of $k$-linear trees on $n$ vertices}\label{table1}
\begin{tabular}{|r|r|r|r|r|r|r|r|r|r|r|r|r|} \hline
$n$ & $k=1$ & $k=2$  & $k=3$     & $k=4$     & $k=5$      & $k=6$      & $k=7$     & $k=8$   & $k=9$  & $k=10$ & $k=11$ & Total      \\ \hline\hline
10  & 25    & 56     & 22        & 1         & 0          & 0          & 0         & 0       & 0      & 0      & 0      & 105        \\
\rowcolor{gray!25}
11  & 36    & 114    & 74        & 6         & 0          & 0          & 0         & 0       & 0      & 0      & 0      & 231        \\
12  & 50    & 224    & 219       & 37        & 1          & 0          & 0         & 0       & 0      & 0      & 0      & 532        \\
\rowcolor{gray!25}
13  & 70    & 441    & 576       & 158       & 8          & 0          & 0         & 0       & 0      & 0      & 0      & 1,254      \\
14  & 94    & 733    & 1,394     & 591       & 58         & 1          & 0         & 0       & 0      & 0      & 0      & 2,872      \\
\rowcolor{gray!25}
15  & 127   & 1,252  & 3,150     & 1,896     & 304        & 9          & 0         & 0       & 0      & 0      & 0      & 6,739      \\
16  & 168   & 2,091  & 6,733     & 5,537     & 1,342      & 82         & 1         & 0       & 0      & 0      & 0      & 15,955     \\
\rowcolor{gray!25}
17  & 222   & 3,393  & 13,744    & 14,812    & 5,085      & 508        & 11        & 0       & 0      & 0      & 0      & 37,776     \\
18  & 288   & 5,408  & 26,969    & 37,133    & 17,232     & 2,635      & 112       & 1       & 0      & 0      & 0      & 89,779     \\
\rowcolor{gray!25}
19  & 375   & 8,440  & 51,185    & 87,841    & 53,200     & 11,523     & 804       & 12      & 0      & 0      & 0      & 213,381    \\
20  & 480   & 12,982 & 94,323    & 198,267   & 152,316    & 44,704     & 4,730     & 145     & 1      & 0      & 0      & 507,949    \\
\rowcolor{gray!25}
21  & 616   & 19,650 & 169,453   & 429,199   & 409,105    & 156,513    & 23,451    & 1,182   & 14     & 0      & 0      & 1,209,184  \\
22  & 781   & 29,388 & 297,533   & 896,731   & 1,040,846  & 504,869    & 102,186   & 7,862   & 184    & 1      & 0      & 2,880,382  \\
\rowcolor{gray!25}
23  & 990   & 43,394 & 512,006   & 1,814,978 & 2,526,691  & 1,517,918  & 400,074   & 43,602  & 1,682  & 15     & 0      & 6,861,351  \\
24  & 1,243 & 63,430 & 865,050   & 3,572,810 & 5,887,488  & 4,300,385  & 1,434,484 & 211,388 & 12,381 & 226    & 1      & 16,348,887 \\
\rowcolor{gray!25}
25  & 1,562 & 91,754 & 1,437,739 & 6,858,774 & 13,231,478 & 11,567,238 & 4,773,006 & 915,546 & 75,951 & 2,288  & 17     & 38,955,354 \\ \hline
\end{tabular}
\end{table}}  

\begin{thm}\label{thm1}
The generating function for $k$-linear trees on $n$ vertices is

\begin{align*}
2\sum_{n=1}^\infty \sum_{k=0}^\infty a_{n,k}x^ny^k &=  \left(P(x) - \frac{1}{1-x}\right)^2 \frac{x^2y^2}{\left( 1 -x+xy -xyP(x)  \right)} \\
&+\frac{1}{1-x} \left(P(x^2) - \frac{1}{1-x^2}\right) \frac{x^2y^2}{\left( 1 -\frac{(P(x^2)-1) x^2y^2}{1-x^2}  \right)}\\
&+\left(P(x^2) - \frac{1}{1-x^2}\right)\left(\frac{P(x)-1}{1-x^2}\right)  \frac{x^3y^3}{\left( 1 -\frac{(P(x^2)-1) x^2y^2}{1-x^2}  \right)}.
\end{align*}
\end{thm}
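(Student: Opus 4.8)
The plan is to use the given identity $a_{n,k}=\tfrac12(r_{n,k}+s_{n,k})$ and to build the bivariate generating functions for the reflection count $r_{n,k}$ and the symmetric count $s_{n,k}$ separately: the first line of the claimed formula will turn out to be $\sum r_{n,k}x^ny^k$ and the last two lines together will be $\sum s_{n,k}x^ny^k$. Everything rests on one structural observation. In a linear tree every HDV lies on the central induced path (the \emph{spine}); every off-spine vertex has degree $\le 2$, so the pieces hanging off an HDV are pendant paths (\emph{legs}), and the spine vertices strictly between two consecutive HDVs form a degree-$2$ path (a \emph{spacer}). Since the legs at a fixed vertex form an unordered multiset of paths of lengths $\ge 1$, their enumerator by total leg-vertices is exactly $P(x)=\prod_{i\ge1}(1-x^i)^{-1}$; the degree condition forces $\ge 2$ legs at each of the two extreme HDVs (enumerator $P(x)-\tfrac1{1-x}$, i.e.\ partitions into $\ge 2$ parts) and $\ge 1$ leg at each interior HDV (enumerator $P(x)-1$), while a spacer is an arbitrary path with enumerator $\tfrac1{1-x}$.

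First I would enumerate \emph{oriented} linear trees, i.e.\ linear trees with a chosen direction on the spine. Reading left to right, such a tree is a unique word $v_1,\dots,v_k$ of HDVs interleaved with spacers, so each oriented tree corresponds to exactly one data word; an asymmetric tree yields two such words while a symmetric tree yields a single palindromic word, so the number of words on $n$ vertices with $k$ HDVs is precisely $r_{n,k}$, which counts symmetric trees once and asymmetric trees twice. Concatenating the block enumerators for $k\ge2$ — an extreme block $xy\,(P(x)-\tfrac1{1-x})$, then $k-2$ interior blocks $xy\,(P(x)-1)$ each preceded by a spacer $\tfrac1{1-x}$, then a final spacer and extreme block — and summing the resulting geometric series in $k$ collapses to $\dfrac{x^2y^2(P(x)-\frac1{1-x})^2}{1-x+xy-xyP(x)}$, which is the first line.

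Next I would enumerate the \emph{symmetric} trees, each recovered uniquely from its ``left half'' together with the data straddling the reflection axis. Because each left-half leg of length $i$ is matched with an identical mirror leg and so accounts for $2i$ vertices, every leg- and spacer-enumerator undergoes $x\mapsto x^2$: extreme HDVs contribute $P(x^2)-\tfrac1{1-x^2}$, interior HDVs $P(x^2)-1$, and internal spacers $\tfrac1{1-x^2}$. The two positions of the axis then give the two remaining lines. For even $k=2m$ the axis falls inside the central spacer; splitting that spacer into a possibly-unpaired central vertex plus mirror pairs gives enumerator $\tfrac1{1-x^2}+\tfrac{x}{1-x^2}=\tfrac1{1-x}$, and summing over $m\ge1$ yields the second line. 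For odd $k=2m+1$ the axis passes through a central HDV fixed by the reflection; its legs are \emph{not} doubled (counted once, enumerator $P(x)-1$) while the spacer joining it to $v_m$ is doubled ($\tfrac1{1-x^2}$), and summing over $m\ge1$ yields the third line. Adding $\sum r_{n,k}x^ny^k$ to $\sum s_{n,k}x^ny^k$ gives $2\sum a_{n,k}x^ny^k$ for every $k\ge2$; the right side has no $y^0$ or $y^1$ terms, corresponding to the elementary path ($k=0$) and g-star ($k=1$) cases, which are treated separately.

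I expect the main obstacle to be the symmetric enumeration, specifically the treatment of the reflection axis: one must argue that a symmetric tree is recovered from exactly one left-half word, split the two parities of $k$ cleanly, and verify the two delicate local computations — that the central spacer contributes $\tfrac1{1-x}$ (combining the unpaired-vertex and paired-vertex subcases) and that the central HDV's legs are enumerated by $P(x)$ rather than $P(x^2)$, since the fixed vertex imposes no pairing on its own legs. Checking that these local factors knit together into exactly the stated denominators $1-\tfrac{(P(x^2)-1)x^2y^2}{1-x^2}$ is the crux of the argument.
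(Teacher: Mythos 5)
Your proposal is correct and follows essentially the same route as the paper's own proof: the same decomposition into exterior stars (enumerator $P(x)-\frac{1}{1-x}$), interior stars ($P(x)-1$), and spacers ($\frac{1}{1-x}$), the same geometric-series summation of concatenated blocks for $r_{n,k}$, and the same halving with $x\mapsto x^2$ plus parity split (central path for even $k$, central star for odd $k$) for $s_{n,k}$, combined via $2a_{n,k}=r_{n,k}+s_{n,k}$. Your explicit justifications (the word/palindrome argument for why concatenation counts $r_{n,k}$, and the central-spacer computation $\frac{1}{1-x^2}+\frac{x}{1-x^2}=\frac{1}{1-x}$) merely spell out steps the paper asserts directly.
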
  
\begin{proof}
 First, we enumerate the nonisomorphic generalized stars on $n$ vertices. Since two g-stars are non-isomorphic if and only if the lengths of their arms differ, we notice a one-to-one correspondence between nonisomorphic generalized stars and partitions. In particular, the number of nonisomorphic g-stars on $n$ vertices is $p(n-1)$ (the $-1$ accounting for the designated central vertex), with each partition corresponding to a distinct set of possible arm lengths. Linear trees are formed from generalized stars on $\geq 2$ vertices, with intermediate paths of arbitrary length. Therefore, we will use the generating function for the number of non-isomorphic generalized stars on $\geq 2$ vertices, which is $$x\left(P(x)-1\right) = x\left(\sum_{n=0}^\infty p(n)x^n  -1\right) = \sum_{n=2}^\infty p(n-1)x^n .$$
\begin{center}
\begin{figure}[!ht]
\centering
\def\r{1.9}
\def\k{6pt}
\begin{tikzpicture}[scale=0.7,
bor/.style={circle, draw, only marks, mark=*, fill=white,mark size=\k},
rel/.style={rectangle,draw,rounded corners=0.6ex,  fill=white, minimum size=\k},
coo/.style={circle, draw, only marks, mark=*, fill=red, minimum size=\k+3pt},
]
\draw  (0,0) node[coo]{}-- (-\r,0)node[bor]{} --(0,0) --(0,\r)node[bor]{} --(0,0)--(\r,0)node[bor]{};
\end{tikzpicture}
\hspace{2cm}
\begin{tikzpicture}[scale=0.7,
bor/.style={circle, draw, only marks, mark=*, fill=white,mark size=\k},
rel/.style={rectangle,draw,rounded corners=0.6ex,  fill=white, minimum size=\k},
coo/.style={circle, draw, only marks, mark=*, fill=red, minimum size=\k+3pt},
]
\draw  (0,0) node[coo]{}-- (0,\r)node[bor]{}  --(0,2*\r)node[bor]{} --(0,0)--(\r,0)node[bor]{};
\end{tikzpicture}
\hspace{2cm}
\begin{tikzpicture}[scale=0.7,
bor/.style={circle, draw, only marks, mark=*, fill=white,mark size=\k},
rel/.style={rectangle,draw,rounded corners=0.6ex,  fill=white, minimum size=\k},
coo/.style={circle, draw, only marks, mark=*, fill=red, minimum size=\k+3pt},
]
\draw  (0,0) node[coo]{}-- (0,\r)node[bor]{}  --(0,2*\r)node[bor]{} --(0,3*\r)node[bor]{};
\end{tikzpicture}

\caption{The $p(3)=3$ non-isomorphic generalized stars on $4$ vertices (star centers in red)}
\label{fig:10-NL-tree}
\end{figure}
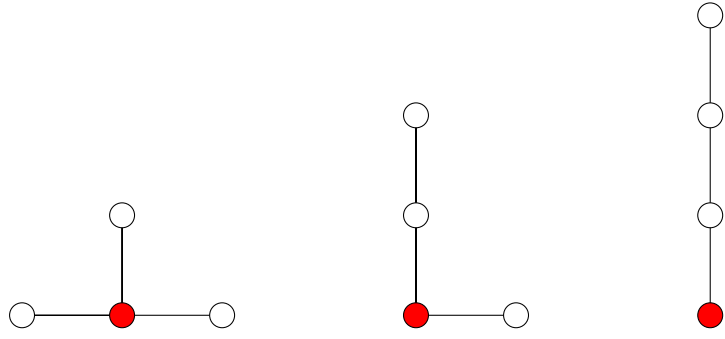
\end{center}
Let an exterior star be a generalized star at the end of the linear tree.  Such stars must have a central vertex of degree $\geq 2$, not counting the concatenating path. Therefore, there is a bijection between partitions of $n-1$ with $\geq 2$ parts and non-isomorphic exterior stars on $n$ vertices. Since there is only a single partition of $n$ with one part, $n$ itself, the generating function for exterior stars is $$x\left(P(x) - \frac{1}{1-x}\right) = x\left(\sum_{n=0}^\infty p(n)x^n  -\sum_{n=0}^\infty x^n \right) = \sum_{n=2}^\infty (p(n-1)-1)x^n.$$ Additionally, up to isomorphism, there is a unique path of length $i$, so that the generating function for the number of paths on $n$ vertices has the form $\frac{1}{1-x}$.

Therefore, the number of linear trees generated by concatenating an exterior star, $k-2$ interior stars, and a trailing exterior star, by $k-1$ paths of arbitrary length, is
\begin{align*}
\sum_{n=1}^\infty \sum_{k=0}^\infty r_{n,k}x^ny^k &= \sum_{k=2}^\infty \left(xP(x) - \frac{x}{1-x}\right)^2 \left(xP(x)-x\right)^{k-2} \frac{1}{(1-x)^{k-1}}y^k  \\
&= \left(P(x) - \frac{1}{1-x}\right)^2 \sum_{k=2}^\infty\left(P(x)-1\right)^{k-2} \frac{1}{(1-x)^{k-1}}x^ky^k \\
&= \left(P(x) - \frac{1}{1-x}\right)^2 \frac{x^2y^2}{\left( 1 -x+xy -xyP(x)  \right)}.
\end{align*} 

We now enumerate $s_{n,k}$, the number of reflectionally symmetric $k$-linear trees on $n$ vertices. These have a freely chosen central component, after which one half of the tree completely determines the other half. The component is a path when $k$ is even, and a generalized star when $k$ is odd.

If the central component is a path, it is free to have an arbitrary number of vertices, while every other component on $n$ vertices determines $2n$ vertices due to reflectional symmetry. Therefore, we count the number of $2k$-linear trees which can be generated by concatenating an exterior star, $k-1$ interior stars, a freely chosen central path, and their reflections:
\begin{align*}
\sum_{k=2}^\infty &\left(x^2P(x^2) - \frac{x^2}{1-x^2}\right)\left(x^2P(x^2)-x^2\right)^{k-2} \frac{1}{(1-x^2)^{k-2}} \frac{1}{1-x}y^{2k-2}  \\
&= \frac{1}{1-x}\left(P(x^2) - \frac{1}{1-x^2}\right) \sum_{k=2}^\infty\left(P(x^2)-1\right)^{k-2} \frac{1}{(1-x^2)^{k-2}}x^{2k-2}y^{2k-2} \\
&=\frac{1}{1-x} \left(P(x^2) - \frac{1}{1-x^2}\right) \frac{x^2y^2}{\left( 1 -\frac{(P(x^2)-1) x^2y^2}{1-x^2}  \right)}.
\end{align*}
We can conduct a similar analysis for a $(2k+1)$-linear tree, where the central component is instead a generalized star. We obtain the generating function
\begin{align*}
\sum_{k=2}^\infty &\left(x^2P(x^2) - \frac{x^2}{1-x^2}\right)\left(x^2P(x^2)-x^2\right)^{k-2} \frac{1}{(1-x^2)^{k-1}} (xP(x)-x)y^{2k-1}  \\
&= \left(P(x^2) - \frac{1}{1-x^2}\right)(P(x)-1) \sum_{k=2}^\infty\left(P(x^2)-1\right)^{k-2} \frac{1}{(1-x^2)^{k-1}}x^{2k-1}y^{2k-1} \\
&= \left(P(x^2) - \frac{1}{1-x^2}\right)\left(\frac{P(x)-1}{1-x^2}\right)  \frac{x^3y^3}{\left( 1 -\frac{(P(x^2)-1) x^2y^2}{1-x^2}  \right)}.
\end{align*}
Noting that $2a_{n,k}=r_{n,k}+s_{n,k}$ and summing all three of these generating functions completes the proof.
\end{proof}
From this generating function, we can extract the number of $k$-linear trees on $n$ vertices for small values of $n$. Table \ref{table1} displays this information for $10\leq n\leq 25$. Note that for fixed $n$, the distribution of $k$-linear trees appears to have a dominant contribution at around $k \simeq 0.2n$. As a corollary of the central limit theorem of Theorem \ref{paththm}, we will characterize this peak exactly, as lying at $\simeq 0.2192n$.

\section{Asymptotics}\label{sec2}

We wish to show that linear trees form an asymptotically small subset of all trees. Wityk \cite{Wityk} showed that the fraction of $k$-linear trees on $n$-vertices to the number of trees with $k$ high degrees vertices approaches $0$ as the number of vertices tends to infinity. However, this was only for a fixed $k$, and only partial results were shown for the natural extension to account for all linear trees. Heuristically, we expect the proportion of trees that are linear to decrease as the number of vertices increases. Given a large tree, we can color all the high degree vertices. The probability that these HDV's all lie on a single induced path intuitively decreases as the number of vertices increases. The next theorem asymptotically proves this result, and Table \ref{table2} shows this phenomenon for small values of $n$.

\begin{table}
\caption{\cite[Appendix A]{Wityk}  The percentage of nonlinear trees on $n$ vertices}\label{table2}
\begin{tabular}{r|r|r|r|r}
$n$ & Nonlinear trees & Linear Trees & \% Nonlinear & Total   \\ \hline
10  & 1               & 105          & 0.9         & 106     \\
11  & 4               & 231          & 1.7         & 235     \\
12  & 19              & 532          & 3.4         & 551     \\
13  & 47              & 1,254        & 3.6         & 1,301   \\
14  & 287             & 2,872        & 9.1         & 3,159   \\
15  & 1,002           & 6,739        & 12.9          & 7,741   \\
16  & 3,365           & 15,955       & 17.4          & 19,320  \\
17  & 10,853          & 37,776       & 22.3          & 48,629  \\
18  & 34,088          & 89,779       & 27.5          & 123,867 \\
19  & 104,574         & 213,381      & 32.9          & 317,955 \\
20  & 315,116         & 507,949      & 38.3          & 823,065 \\
21  & 935,321         & 1,209,184   & 43.6         & 2,144,505     \\
22  & 2,743,364      & 2,880,382  & 48.8         & 5,623,756 \\
23  & 7,966,723      & 6,681,351   & 53.7         & 14,828,074\\
24  & 22,951,010   & 16,348,887   & 58.4         & 39,299,897 \\
25  & 65,681,536  & 38,955,354  & 62.8        & 104,636,890 \\
\end{tabular}
\end{table} 

We can use standard techniques from analytic combinatorics to extract information about the number of nonisomorphic linear trees on $n$ vertices. In particular, we describe the asymptotic growth rate of the number of nonisomorphic linear trees, and show that the path length satisfies a central limit theorem. The methods in this section are all pulled from Flajolet and Sedgewick's monumental treatise \cite{Flajolet}. 

\begin{thm}\label{thm2}
The number of nonisomorphic linear trees on $n$ vertices, $a_n$, is asymptotically given by $$a_n \sim \frac12 \left(P(x_0) - \frac{1}{1-x_0}\right)^2 \frac{x_0}{P(x_0) + x_0P'(x_0) }  \left(  \frac{1}{x_0}\right)^n \simeq 0.7560 (2.3822)^n , $$
where $x_0 = 0.4196\ldots$ is the unique real solution, $0<x<1$, of $$x P(x) = x \prod_{i=1}^\infty \frac{1}{1-x^i} =1.$$
\end{thm}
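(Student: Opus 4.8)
The plan is to specialize the bivariate generating function of Theorem \ref{thm1} at $y = 1$, so that $\sum_k a_{n,k} = a_n$ and the left-hand side becomes $2\sum_n a_n x^n =: 2A(x)$, and then to apply meromorphic singularity analysis (Flajolet--Sedgewick, Theorem IV.10). Setting $y = 1$ collapses the first denominator $1 - x + xy - xyP(x)$ to $1 - xP(x)$, so the first term reads $\left(P(x) - \frac{1}{1-x}\right)^2 \frac{x^2}{1 - xP(x)}$, while the second and third terms acquire the denominator $1 - \frac{(P(x^2)-1)x^2}{1-x^2}$, which vanishes precisely when $x^2 P(x^2) = 1$.

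First I would locate the candidate singularities. Since $P$ is the partition generating function, it is analytic on the open unit disk, so every singularity of $A$ in $|x| < 1$ comes from the vanishing of a denominator. The first term contributes a zero of $1 - xP(x)$; the second and third contribute a point with $x^2 P(x^2) = 1$, i.e. $x^2 = x_0$. On the real interval $(0,1)$ the function $xP(x)$ is continuous, strictly increasing, zero at $x = 0$, and tends to $+\infty$ as $x \to 1^-$ (because $P(x)\to\infty$); hence $xP(x) = 1$ has a unique real root $x_0 = 0.4196\ldots$. The zero of the other denominator then sits at $x = \sqrt{x_0} \approx 0.648 > x_0$, so it is strictly subdominant, as is the boundary singularity of $P$ at $x = 1$.

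Next I would confirm that $x_0$ is the unique dominant singularity and a simple pole. For dominance and uniqueness on its circle I would use a Daffodil-type estimate exploiting the nonnegative coefficients of $P$: for $|x| \le x_0$ one has $|xP(x)| \le |x|\,P(|x|) \le x_0 P(x_0) = 1$, with equality only at $x = x_0$, since equality in $|1 - x^i| \ge 1 - |x|^i$ forces $x^i \ge 0$ for all $i$, in particular $x = |x| = x_0$. Thus $1 - xP(x)$ has no zero in $|x| \le x_0$ other than $x_0$, and by isolation of zeros I may enlarge the disk slightly to $|x| \le R$ with $x_0 < R < \sqrt{x_0}$ so that $x_0$ remains the only pole of $A$ inside. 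Simplicity holds because $\frac{d}{dx}\bigl(1 - xP(x)\bigr)\big|_{x_0} = -\bigl(P(x_0) + x_0 P'(x_0)\bigr) \neq 0$, while the numerator $\left(P(x_0) - \frac{1}{1-x_0}\right)^2 x_0^2$ is nonzero (indeed $P(x_0) \approx 1/x_0 \approx 2.38$ whereas $\tfrac{1}{1-x_0}\approx 1.72$).

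Finally I would read off the residue and invoke the transfer theorem. Writing the dominant term of $2A(x)$ as $g(x)/h(x)$ with $g(x) = \left(P(x)-\frac{1}{1-x}\right)^2 x^2$ and $h(x) = 1 - xP(x)$, the residue of $2A$ at $x_0$ is $\rho = g(x_0)/h'(x_0)$, and a simple pole contributes $[x^n]\,\frac{\rho}{x-x_0} = -\rho\, x_0^{-n-1}$; halving and using $-1/h'(x_0) = 1/(P(x_0)+x_0P'(x_0))$ together with $x_0^{2}\,x_0^{-n-1} = x_0\,(1/x_0)^n$ yields exactly $a_n \sim \frac12\left(P(x_0)-\frac{1}{1-x_0}\right)^2 \frac{x_0}{P(x_0)+x_0 P'(x_0)}\left(\frac{1}{x_0}\right)^n$, with the subdominant terms absorbed into an error $O(R^{-n})$. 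Numerical evaluation of the constant and of $1/x_0$ then gives $0.7560\,(2.3822)^n$. The main obstacle is the analytic bookkeeping of the middle steps: ruling out any complex singularity of modulus $\le x_0$ competing with $x_0$, and verifying that the two subdominant denominators genuinely sit at radius $\sqrt{x_0} > x_0$, since only then does the lone simple pole at $x_0$ govern the asymptotics.
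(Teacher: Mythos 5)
Your proposal is correct and follows essentially the same route as the paper: specialize the generating function of Theorem \ref{thm1} at $y=1$, identify the unique dominant simple pole at $x_0$ with $x_0P(x_0)=1$ (noting the competing denominator $1-x^2P(x^2)$ only contributes at $\sqrt{x_0} > x_0$), and extract the asymptotics by meromorphic singularity analysis. The only difference is one of detail: the paper delegates the pole-dominance claim, the simplicity check, and the residue computation to citations of Flajolet--Sedgewick (Chapter IV and Theorem V.1 on supercritical sequences), whereas you carry these out explicitly, including the daffodil-type argument for uniqueness on the circle $|x| = x_0$.
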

\begin{proof}
The proof is based on meromorphic singularity analysis. We first set $y=1$ in the bivariate generating function of Theorem \ref{thm1}, giving
\begin{align*}
2\sum_{n=1}^\infty a_{n}x^n =  &\left(P(x) - \frac{1}{1-x}\right)^2 \frac{x^2}{\left( 1 -xP(x)  \right)} 
 +\frac{1}{1-x} \left(P(x^2) - \frac{1}{1-x^2}\right) \frac{x^2}{\left( 1 -\frac{(P(x^2)-1) x^2}{1-x^2}  \right)}\\
&\medspace\medspace\medspace +\left(P(x^2) - \frac{1}{1-x^2}\right)\left(\frac{P(x)-1}{1-x^2}\right)  \frac{x^3}{\left( 1 -\frac{(P(x^2)-1) x^2}{1-x^2}  \right)} \\
&= \left(P(x) - \frac{1}{1-x}\right)^2 \frac{x^2}{1 -xP(x) } 
 +\left(P(x^2) - \frac{1}{1-x^2}\right) \frac{x^2 (1+x)}{ 1 - x^2P(x^2)  }\\
&\medspace\medspace\medspace +\left(P(x^2) - \frac{1}{1-x^2}\right)\left(\frac{P(x)-1}{1-x^2}\right)  \frac{x^3 (1-x^2)}{ 1 - x^2P(x^2)  }.
\end{align*}
We know that $P(x)$ is analytic for $x \in \mathbb{C}, |x|<1$. Therefore, the only poles inside the unit disc arise from the denominator terms of $1-xP(x)$ and $1-x^2P(x^2)$. Since $xP(x)$ is strictly increasing on the real line and $ \lim_{x\to 1^{-}} xP(x) = \infty$, there is a unique real root $x_0 = 0.4196\ldots$ to the equation $xP(x)=1$ satisfying $0< x_0 <1$. The denominator term of $1-x^2P(x^2)$ contributes a singularity at $x_0^{\frac12} = 0.6478\ldots > x_0$. Also, the pole at $x_0$ is the only pole on the circle $x = |x_0|$, since $|x P(x)| < x_0 P(x_0)$ if $x\neq x_0$. Therefore, $x_0$ is the dominant singularity, in that it is the pole with smallest absolute value. 

Therefore, appealing to the methods of \cite[Chapter IV]{Flajolet}, we immediately have that $$2a_n \sim   \left(P(x_0) - \frac{1}{1-x_0}\right)^2 \frac{x_0}{P(x_0) + x_0P'(x_0) }  \left(  \frac{1}{x_0}\right)^n.$$
Inn particular, note that in the language of \cite[Chapter V.2, p. 294]{Flajolet}, we are dealing with a \textit{supercritical sequence} with $G(x) = xP(x)$ and $G'(x) = P(x) + xP'(x)$. Hence the result follows directly from \cite[Theorem V.1]{Flajolet}.
\end{proof}

We can then obtain statistics about the number of HDV's in a random linear tree, by conducting another singularity analysis of the generating function of Theorem \ref{thm1}. We will apply the moving pole analysis of Flajolet and Sedgewick. In what follows, for any function $f(u)$ analytic at $u=1$ and satisfying $f(1) \neq 0$, we set
\begin{equation}\label{meanvar}
\mathfrak{m}(f) = \frac{f'(1)}{f(1)}, \hspace{1cm} \mathfrak{v}(f)= \frac{f''(1)}{f(1)} +  \frac{f'(1)}{f(1)} - \left(\frac{f'(1)}{f(1)}\right)^2. 
\end{equation}
We will appeal to the following theorem to prove our main result.
\begin{thm}\cite[Thm IX.12 (Algebraic singularity schema)]{Flajolet}\label{thm9.12}
Let $F(x,y)$ be a function that is bivariate analytic at $(x,y) = (0,0)$ and has non-negative coefficients. Assume also the following conditions: 
\begin{enumerate} 
\item Analytic perturbation: there exist three functions $A, B, C$ analytic in a domain $\mathcal{D} = \{ |x| \leq r \}  \times \{|y-1| < \epsilon\}$, such that, for some $r_0$ with $0<r_0 \leq r$ and $\epsilon >0$, the following representation holds, with $\alpha \not\in \mathbb{Z}_{\leq 0}$,
$$F(x,y) = A(x,y) + B(x,y)C(x,y)^{-\alpha}; $$
furthermore, assume that in $|x| \leq r$, there exists a unique root $\rho$ of the equation $C(x,1)=0$, that this root is simple, and that $B(\rho,1) \neq 0$.
\item Non-degeneracy: one has $\partial_xC(\rho,1) \cdot \partial_yC(\rho,1) \neq 0$, ensuring the existence of a non-constant $\rho(y)$ analytic at $y=1$, such that $C(\rho(y),y)=0$ and $\rho(1) = \rho$.
\item Variability: one has $$\mathfrak{v} \left(  \frac{\rho(1)}{\rho(y)}\right)\neq 0. $$
\end{enumerate}
Then the random variable with probability generating function $\frac{[x^n]F(x,y)}{[x^n]F(x,1)}$ converges in distribution to a Gaussian variable with a speed of convergence that is $O(n^{-\frac12})$. The mean and variance \textnormal{[corrected]} are asymptotically linear in $n$.
\end{thm}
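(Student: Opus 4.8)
The plan is to reduce this to Hwang's Quasi-Powers Theorem by first obtaining a uniform asymptotic estimate for $[x^n]F(x,y)$ as $y$ ranges over a small complex neighborhood of $1$. The strategy is to treat $y$ as a parameter, perform singularity analysis in $x$, and track how the dominant singularity $\rho(y)$ moves as $y$ varies.

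First I would fix $y$ with $|y-1|<\epsilon$ and locate the dominant $x$-singularity of $F(\cdot,y)$. Since $A$ is analytic on $|x|\le r$, all singular behavior comes from $B(x,y)C(x,y)^{-\alpha}$. The non-degeneracy hypothesis $\partial_xC(\rho,1)\neq 0$ together with the implicit function theorem guarantees that the simple root $\rho=\rho(1)$ of $C(x,1)=0$ persists as an analytic function $\rho(y)$ with $C(\rho(y),y)=0$, while $\partial_yC(\rho,1)\neq 0$ ensures $\rho(y)$ is non-constant. Near $x=\rho(y)$ one factors $C(x,y)=\partial_xC(\rho(y),y)\,(x-\rho(y))\,(1+o(1))$, so that
$$ C(x,y)^{-\alpha} = \left(-\partial_xC(\rho(y),y)\,\rho(y)\right)^{-\alpha}\left(1-\frac{x}{\rho(y)}\right)^{-\alpha}(1+o(1)). $$

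Next I would apply the standard transfer theorem of singularity analysis. Because $\alpha\notin\mathbb{Z}_{\le 0}$, the coefficient of $(1-x/\rho(y))^{-\alpha}$ contributes $\frac{n^{\alpha-1}}{\Gamma(\alpha)}\rho(y)^{-n}$, giving
$$ [x^n]F(x,y) = B(\rho(y),y)\left(-\partial_xC(\rho(y),y)\,\rho(y)\right)^{-\alpha}\frac{n^{\alpha-1}}{\Gamma(\alpha)}\,\rho(y)^{-n}\,\bigl(1+O(n^{-1})\bigr). $$
The crucial point, and the main technical obstacle, is that this estimate must hold \emph{uniformly} for $y$ in a complex neighborhood of $1$. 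This requires verifying that $\rho(y)$ remains the unique singularity of smallest modulus as $y$ varies (a consequence of continuity together with the simplicity and uniqueness of $\rho$ at $y=1$, which uses $B(\rho,1)\neq 0$), and that a single Hankel-type contour can be deployed with error bounds uniform in $y$. Establishing this uniformity rigorously is the heart of the argument, and is exactly where the hypotheses on the domain $\mathcal{D}$ are consumed.

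Finally, forming the probability generating function yields
$$ \frac{[x^n]F(x,y)}{[x^n]F(x,1)} = \frac{B(\rho(y),y)\left(\partial_xC(\rho(y),y)\,\rho(y)\right)^{-\alpha}}{B(\rho,1)\left(\partial_xC(\rho,1)\,\rho\right)^{-\alpha}}\left(\frac{\rho(1)}{\rho(y)}\right)^n\bigl(1+O(n^{-1})\bigr), $$
which is precisely the quasi-powers form $p_n(y)\sim B(y)\,A(y)^n$ with $A(y)=\rho(1)/\rho(y)$ analytic and nonvanishing near $y=1$. The variability condition $\mathfrak{v}(\rho(1)/\rho(y))\neq 0$ guarantees a strictly positive variance coefficient, so the limit law is non-degenerate. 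Invoking Hwang's Quasi-Powers Theorem then delivers convergence in distribution to a Gaussian with speed $O(n^{-1/2})$, and mean and variance asymptotically $\mathfrak{m}(A)\,n$ and $\mathfrak{v}(A)\,n$, both linear in $n$.
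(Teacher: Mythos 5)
This statement is quoted verbatim from Flajolet and Sedgewick \cite{Flajolet} (Theorem IX.12); the paper supplies no proof of it, only the citation, so there is no internal proof to compare against. Your proposal is, in outline, precisely the argument of the cited source: treat $y$ as a parameter, track the moving dominant singularity $\rho(y)$ furnished by the implicit function theorem, establish a coefficient estimate $[x^n]F(x,y) \sim B(\rho(y),y)\left(-\partial_xC(\rho(y),y)\rho(y)\right)^{-\alpha}\frac{n^{\alpha-1}}{\Gamma(\alpha)}\rho(y)^{-n}$ uniformly in $y$, and then invoke Hwang's Quasi-Powers Theorem with $A(y)=\rho(1)/\rho(y)$, the variability condition ensuring a non-degenerate limit. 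Two points could be tightened. First, the factorization $C(x,y)=\partial_xC(\rho(y),y)\,(x-\rho(y))\,(1+o(1))$ should be replaced by the Weierstrass preparation form $C(x,y)=(x-\rho(y))H(x,y)$ with $H$ analytic and nonvanishing near $(\rho,1)$; this makes the prefactor a genuinely analytic function of $(x,y)$, which is what the uniform transfer theorem actually requires, rather than a pointwise $o(1)$ estimate. Second, your parenthetical attributes the persistence and uniqueness of the dominant singularity partly to $B(\rho,1)\neq 0$; in fact uniqueness for $y$ near $1$ follows from a Rouch\'e--compactness argument on $|x|\le r$ (where $C(\cdot,1)$ vanishes only at the simple root $\rho$), while the role of $B(\rho,1)\neq 0$ is to guarantee that the singular term does not vanish at $\rho(y)$, so that the leading asymptotic term is truly present and the quasi-powers prefactor is nonzero and analytic. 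With those repairs your sketch is a faithful reconstruction of the standard proof.
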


\begin{thm}\label{paththm}
Define the mean and variance
\begin{align*}
 \mu &=  \frac{P(x_0)-1}{ P(x_0)+x_0P'(x_0)  } \simeq 0.219273\ldots\\
\sigma^2 &= \frac{(x_0-1)    \left(  -1-P'(x_0)x_0+P'(x_0)x_0^2+P'(x_0)^2x_0^3-P''(x_0)x_0^2+P''(x_0)x_0^3  \right) }{   x_0^2(P(x_0) +x_0P'(x_0) )^3 } \simeq 0.0567065\ldots.
\end{align*}
For large $n$, $\frac{a_{n,k}}{a_n}$ converges in distribution to a Gaussian distribution with mean $\mu n$ and variance $\sigma^2 n$, with speed of convergence $O(n^{-\frac12})$, i.e. the normalized random variable
$$ \frac{1}{\sigma\sqrt{n}} \left(  \frac{a_{n,k}}{a_n}- \mu n \right) $$
converges in distribution to a standard normal distribution.
\end{thm}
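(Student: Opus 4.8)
The plan is to apply Theorem \ref{thm9.12} directly to the bivariate generating function $F(x,y) = 2\sum_{n,k} a_{n,k}x^n y^k$ of Theorem \ref{thm1}, which has non-negative coefficients and whose diagonal ratio $\frac{[x^n]F(x,y)}{[x^n]F(x,1)} = \frac{\sum_k a_{n,k}y^k}{a_n}$ is precisely the probability generating function of the number of HDV's in a uniformly random linear tree on $n$ vertices (the spurious factor $2$ cancels). The governing observation is that, of the three summands of $F$, only the first term $T_1(x,y) = \left(P(x)-\frac{1}{1-x}\right)^2 \frac{x^2y^2}{C(x,y)}$, with $C(x,y) := 1-x+xy-xyP(x)$, produces a singularity near the dominant one; the two symmetric-tree terms $T_2,T_3$ are subdominant and will be absorbed into the analytic part of the schema.

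First I would isolate the dominant singularity. Setting $y=1$ gives $C(x,1)=1-xP(x)$, which vanishes at the unique real root $x_0\in(0,1)$ of Theorem \ref{thm2}, whereas $T_2$ and $T_3$ carry denominators vanishing only where $x^2P(x^2)=1$, that is at $|x|=x_0^{1/2}>x_0$. Choosing a radius $r$ with $x_0<r<x_0^{1/2}$ is possible: the analytic function $1-xP(x)$ has only finitely many zeros in any compact subdisc of $|x|<1$, and $x_0$ is its only zero on $|x|=x_0$ by the domination bound $|xP(x)|\le |x|P(|x|)<x_0P(x_0)=1$ for $|x|<x_0$. On $\{|x|\le r\}$ the terms $T_2+T_3$ are then analytic, and shrinking the $y$-neighbourhood if necessary they remain analytic on $\mathcal{D}=\{|x|\le r\}\times\{|y-1|<\epsilon\}$, since by continuity the moving root of the symmetric denominators stays near $x_0^{1/2}$. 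I therefore set $A(x,y):=T_2(x,y)+T_3(x,y)$, $B(x,y):=\left(P(x)-\frac{1}{1-x}\right)^2 x^2y^2$, $C$ as above, and $\alpha=1$, realizing the representation $F=A+BC^{-1}$.

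Next I would verify the three hypotheses of Theorem \ref{thm9.12}. For the analytic perturbation condition, $C(x,1)=1-xP(x)$ has the unique simple root $\rho=x_0$ in $|x|\le r$ by the argument above; simplicity follows from $\partial_xC(x_0,1)=-(P(x_0)+x_0P'(x_0))\neq 0$, and $B(x_0,1)=\left(P(x_0)-\frac{1}{1-x_0}\right)^2 x_0^2\neq 0$ because $P(x_0)>\frac{1}{1-x_0}$ (the extra factors $\prod_{i\ge 2}(1-x_0^i)^{-1}>1$). For non-degeneracy, $\partial_yC=x(1-P(x))$ is independent of $y$ and gives $\partial_yC(x_0,1)=x_0(1-P(x_0))\neq 0$, so together with $\partial_xC(x_0,1)\neq 0$ the implicit function theorem supplies a non-constant analytic branch $\rho(y)$ with $C(\rho(y),y)=0$ and $\rho(1)=x_0$. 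The variability condition I would establish by computing the variance outright and confirming positivity.

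Finally, the constants come from implicit differentiation of $C(\rho(y),y)=0$. To first order $\rho'(1)=-\partial_yC/\partial_xC\big|_{(x_0,1)}=-\frac{x_0(P(x_0)-1)}{P(x_0)+x_0P'(x_0)}$, so $\mu=\mathfrak{m}\!\left(\frac{\rho(1)}{\rho(y)}\right)=-\rho'(1)/x_0=\frac{P(x_0)-1}{P(x_0)+x_0P'(x_0)}$, matching the stated value. The variance $\sigma^2=\mathfrak{v}\!\left(\frac{\rho(1)}{\rho(y)}\right)$ then requires $\rho''(1)$, obtained from a second implicit differentiation of $C(\rho(y),y)=0$; since $\partial_yC$ does not depend on $y$ this shortens the computation, but the chain rule still generates the terms in $P'(x_0)$ and $P''(x_0)$ appearing in the closed form for $\sigma^2$, and its positivity discharges condition (3). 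Theorem \ref{thm9.12} then yields the Gaussian limit with speed $O(n^{-1/2})$ and asymptotically linear mean $\mu n$ and variance $\sigma^2 n$. I expect the main obstacle to be twofold: rigorously confining the subdominant symmetric singularities outside $\mathcal{D}$ \emph{uniformly} in $y$, so that $A$ is genuinely analytic there, and pushing the second-derivative computation through to the exact $\sigma^2$, which is the only truly laborious step.
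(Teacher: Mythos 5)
Your proposal is correct and matches the paper's proof essentially step for step: the same decomposition $F = A + BC^{-1}$ with $A$ the two symmetric-tree terms, $C(x,y) = 1-x+xy-xyP(x)$, and $\alpha = 1$, the same verification of the schema's three conditions at the dominant simple root $\rho = x_0$, and the same extraction of $\mu$ and $\sigma^2$ from the local expansion of $\rho(y)$ (the paper does this by series reversion on the coefficients $c_{i,j}$, which is equivalent to your implicit differentiation of $C(\rho(y),y)=0$). Incidentally, your $B(x,y)$ is the natural one; the paper's printed $B$ carries an extra factor $(2-2x+xy-xyP(x))$, apparently a vestige of a deleted derivative computation, but this does not affect either argument since the mean and variance depend only on $C$ through $\rho(y)$.
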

\begin{proof}

We again refer to Theorem \ref{thm1}, that
\begin{align}
2\sum_{n=1}^\infty \sum_{k=0}^\infty a_{n,k}x^ny^k &=  \left(P(x) - \frac{1}{1-x}\right)^2 \frac{x^2y^2}{\left( 1 -x+xy -xyP(x)  \right)} \label{deriv1}\\
&+\frac{1}{1-x} \left(P(x^2) - \frac{1}{1-x^2}\right) \frac{x^2y^2}{\left( 1 -\frac{(P(x^2)-1) x^2y^2}{1-x^2}  \right)} \nonumber \\
&+\left(P(x^2) - \frac{1}{1-x^2}\right)\left(\frac{P(x)-1}{1-x^2}\right)  \frac{x^3y^3}{\left( 1 -\frac{(P(x^2)-1) x^2y^2}{1-x^2}  \right)}. \nonumber
\end{align}
At $y=1$, Equation \eqref{deriv1} reduces to
\begin{align*}
&  \left(P(x) - \frac{1}{1-x}\right)^2 \frac{x^2}{\left( 1 -xP(x)  \right)^2} \\
&+\frac{1}{1-x} \left(P(x^2) - \frac{1}{1-x^2}\right) \frac{ x^2(1-x^2) }{\left( 1 -x^2 P(x^2) \right)}\\
&+\left(P(x^2) - \frac{1}{1-x^2}\right)\left(\frac{P(x)-1}{1-x^2}\right) \frac{x^3(1-x^2)}{\left( 1 -x^2 P(x^2)\right)}.
\end{align*}
By the same singularity analysis as that of Theorem \ref{thm2}, we see that the dominant pole occurs at $x_0 = 0.4196\ldots$ again. Thus at $y=1$ we take $\rho = x_0 = 0.4196\ldots$ as before. By inspecting Equation \eqref{deriv1}, we see that for $y$ sufficiently close to $1$, the dominant singularity will arise from the term with denominator $(1-x+xy-xyP(x))$, and the other two terms will be analytic in a sufficiently small neighborhood of $(x_0,1)$. We can thus appeal to Theorem \ref{thm9.12}, where we take 
\begin{align*} 
A(x,y) &= \frac12\frac{1}{1-x} \left(P(x^2) - \frac{1}{1-x^2}\right) \frac{x^2y^2}{\left( 1 -\frac{(P(x^2)-1) x^2y^2}{1-x^2}  \right)} \\
 &\hspace{4mm}+\frac12\left(P(x^2) - \frac{1}{1-x^2}\right)\left(\frac{P(x)-1}{1-x^2}\right)  \frac{x^3y^3}{\left( 1 -\frac{(P(x^2)-1) x^2y^2}{1-x^2}  \right)}, \\
B(x,y) &= \frac12\left(P(x) - \frac{1}{1-x}\right)^2 {x^2y^2(2-2x+xy-xyP(x))}, \\
C(x,y) &=  1 -x+xy -xyP(x).
\end{align*}


Setting 
$$c_{i,j} :=   \left.\frac{\partial^{i+j}}{\partial x^i\partial y^j} C(x,y) \right\vert_{(\rho,1)},$$ 
we find 
\begin{align*}
c_{1,0} &= -P(x_0) - x_0 P'(x_0)   = -6.3082\ldots ,\\
c_{0,1} &= x_0 - x_0P(x_0) = x_0 -1  = -0.5804\ldots,\\
c_{1,1} &= 1 - P(x_0) -x_0P'(x_0)  = -5.3082\ldots,\\
c_{2,0} &=  -2P'(x_0) -x_0P''(x_0) = -49.5223\ldots,\\
c_{0,2} &= 0.\\
\end{align*}
The non-degeneracy condition then simplifies to $c_{0,1} c_{1,0} = 3.6612\ldots \neq 0$. Furthermore, we can solve for the local expansion of the functional equation $C(\rho(y),y) = 0$ around $y=1$, by using standard series reversion techniques \cite[Equation (38), p. 672]{Flajolet} to find
\begin{align*}
\rho(y)  = \rho - \frac{c_{0,1}}{c_{1,0}} (y-1) - \frac{  c_{1,0}^2 c_{0,2} - 2 c_{1,0} c_{1,1}c_{0,1}+c_{2,0}c_{0,1}^2}{  2 c_{1,0}^3} (y-1)^2 + O((y-1)^3)
\end{align*}
and thus 
\begin{align*}
\frac{\rho(y)}{\rho}  &= 1 -  \frac{P(x_0)-1}{ P(x_0)+x_0P'(x_0)  }(y-1)  \\
&-   \frac{(x_0-1) \left(  2P(x_0)^2+2P'(x_0) -P''(x_0) x_0 +P''(x_0)x_0^2 +2P'(x_0)^2x_0^2 -2P(x_0)   \right) }{  2x_0(P(x_0) +P'(x_0)x_0 )^3 }   (y-1)^2 \\
&+ O((y-1)^3) \\
& := 1 - \gamma (y-1) - \delta (y-1)^2 +O((y-1)^3).
\end{align*}
Then, we can expand the inverse to second order, which gives
\begin{equation*}
\frac{\rho}{\rho(y)}  = 1 + \gamma (y-1) + (\gamma^2+\delta) (y-1)^2 +O((y-1)^3)
\end{equation*}

Referring back to definition \eqref{meanvar}, we further deduce
$$\mathfrak{m}\left(   \frac{\rho}{\rho(y)} \right)  = \gamma, \hspace{4mm} \mathfrak{v}\left(   \frac{\rho}{\rho(y)} \right)  = 2(\delta+\gamma^2) +\gamma-\gamma^2 = 2\delta + \gamma + \gamma^2,$$
which expand to the values of $\mu$ and $\sigma^2$ given in the statement of the theorem. Numerically, we have $\mathfrak{\mu} = 0.2192\ldots$ as expected from the numerical data, and $\mathfrak{v} = 0.0567\ldots \neq 0$, and the variance condition is also verified. Finally, we appeal to a general remark \cite[p. 678]{Flajolet} that the asymptotic mean and the variance of our distribution are given exactly by Equation \eqref{meanvar}.
\end{proof}

\section*{Acknowledgements}
Part of this work was carried out by the second author as part of his honors thesis at the College of William and Mary. T.W. would also like to thank Roberto Costas-Santos for being an excellent advisor during the College of William and Mary Matrix REU, where part of this work was completed. We would also like to thank the anonymous referees, who suggested the proof of the central limit theorem, and Larry Washington, who finally found a very pernicious error in our proof of the central limit theorem.

\end{document}